\newtheorem{thm}{Theorem}[section]
\theoremstyle{definition}
\theoremstyle{remark}
\newtheorem*{rem}{Remark}
\newcommand{\f}[1]{\mathbb{F}_{#1}}
\newcommand{\tfr}[1]{{\widehat{#1}}}
\DeclareMathOperator{\Tr}{Tr}
\DeclareMathOperator{\wt}{wt}
\begin{document}
\baselineskip=17pt

\title[Binary Kloosterman sums]{Binary Kloosterman sums using Stickelberger's theorem and the Gross-Koblitz formula}

\author[F. G\"olo\u{g}lu]{Faruk G\"olo\u{g}lu}

\author[G. McGuire]{Gary McGuire}
\author[R. Moloney]{Richard Moloney}

\address{School of Mathematical Sciences\\
	University College Dublin\\
		Ireland\\}
\email{farukgologlu@gmail.com, gary.mcguire@ucd.ie, richard.moloney@ucd.ie}

\date{}


\begin{abstract}
Let $\mathcal{K}(a)$ denote the Kloosterman sum on the finite field of order $2^n$. 
We give a simple characterization of  $\mathcal{K}(a)$ modulo $16$,
in terms of the trace of $a$ and one other function.
We also give a characterization of $\mathcal{K}(a)$ modulo $64$ in terms of 
a different function, which we call the lifted trace.
Our proofs use Fourier analysis, Stickelberger's theorem and the Gross-Koblitz formula.
\end{abstract}


\subjclass[2010]{11L05}

\keywords{Kloosterman sums, Stickelberger's theorem, Gross-Koblitz formula}

\maketitle

\section{Introduction}

Let $\mathcal{K}_{p^n}(a)$ denote the $p$-ary Kloosterman sum defined by
\[
\mathcal{K}_{p^n}(a) := \sum_{x \in \f{p^n}} \zeta^{\Tr (x^{p^n-2}+ax)},
\]
for any $a \in \f{p^n}$, where $\zeta$ is a primitive $p$-th root of unity 
and $\Tr$ denotes the absolute trace map $\Tr : \f{p^n} \to \f{p}$ defined as usual as
\[
\Tr (c) := c + c^p + c^{p^2} + \cdots + c^{p^{n-1}}.
\]

Finding explicit zeros (explicit $a$'s with $\mathcal{K}_{p^n}(a)=0$) 
of Kloosterman sums is considered difficult. Recent research on Kloosterman sums is generally concentrated on proving divisibility results and characterisation of Kloosterman sums modulo some integer (see \cite{MMeven, LisonekSeta, Div3, CHZ, Lisonek2009}). 

It is easy to see that binary Kloosterman sums are divisible by $4 = 2^2$, i.e.,
for all $a \in \f{2^n}$,
\begin{equation}\label{folklore}
\mathcal{K}_{2^n}(a) \equiv 0 \pmod{4}.
\end{equation}

They also satisfy (see \cite{LachWolf})
\[
-2^{n/2+1} \le \mathcal{K}_{2^n}(a) \le 2^{n/2+1},
\]
and take every value which is congruent to $0$ modulo $4$ in that range.

Helleseth and Zinoviev proved the following result which improved \eqref{folklore} 
one level higher, i.e., modulo $2^3$,  in the sense
of describing the $a$ for which $\mathcal{K}_{2^n}(a)$ is 0 or 4 modulo 8.

\bigskip

\begin{thm}\cite{HZ}\label{mod8}
For $a \in \f{2^n}$,
\[
\mathcal{K}_{2^n}(a) \equiv \left\{ \begin{array}{ll} 
0 \ (\mathrm{mod} \ 8) & \textrm{if } \Tr(a) = 0,\\
4 \ (\mathrm{mod} \ 8) & \textrm{if } \Tr(a) = 1. 
\end{array}
\right. 
\]
\end{thm}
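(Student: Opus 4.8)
The plan is to attack the statement through the Gauss-sum (Fourier) expansion of the Kloosterman sum and to read off the congruence from the $2$-adic valuations supplied by Stickelberger's theorem. Write $q=2^n$ and recall $\zeta=-1$ and $x^{2^n-2}=x^{-1}$ for $x\neq0$, so that $\mathcal{K}_{2^n}(a)=1+\sum_{x\in\f{2^n}^{*}}(-1)^{\Tr(x^{-1}+ax)}$; the case $a=0$ gives $\mathcal{K}_{2^n}(0)=0$ immediately, so assume $a\neq0$. First I would expand the additive character $(-1)^{\Tr(x^{-1})}$ in multiplicative characters: setting $G(\phi)=\sum_{x\in\f{2^n}^{*}}\phi(x)(-1)^{\Tr(x)}$ and letting $\phi$ run over all characters of $\f{2^n}^{*}$, Fourier inversion together with the substitution $u=ax$ yields $\mathcal{K}_{2^n}(a)=1+\frac{1}{q-1}\sum_{\phi}\phi^{-1}(a)\,G(\phi)^{2}$, where the trivial character contributes $G(\phi_0)^2=1$.

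Next I would parametrise $\phi=\omega^{-t}$ by the Teichm\"uller character $\omega$, $0\le t\le q-2$, and invoke Stickelberger's theorem, which gives $v_2\bigl(G(\omega^{-t})\bigr)=\wt(t)$, the binary Hamming weight of $t$. All quantities lie in the unramified ring $\mathbb{Z}_2[\zeta_{q-1}]$, in which $2$ is inert with residue field $\f{2^n}$; consequently every index with $\wt(t)\ge 2$ has $v_2\bigl(G(\omega^{-t})^2\bigr)\ge 4$ and so vanishes modulo $8$. Only the weight-one indices $t=2^i$, $i=0,\dots,n-1$, survive, leaving $\mathcal{K}_{2^n}(a)\equiv 1+(q-1)^{-1}\bigl(1+\sum_{i=0}^{n-1}\omega^{2^i}(a)\,G(\omega^{-2^i})^{2}\bigr)\pmod 8$.

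To evaluate the surviving sum I would use the Frobenius relation $G(\omega^{-2^i})=\sigma^{i}\bigl(G(\omega^{-1})\bigr)$, where $\sigma$ generates $\mathrm{Gal}(\mathbb{Q}_2(\zeta_{q-1})/\mathbb{Q}_2)$, together with $\omega^{2^i}(a)=\sigma^{i}(\omega(a))$; since $\sigma$ is a ring homomorphism this rewrites the sum as the trace $\mathrm{Tr}_{\mathbb{Z}_2[\zeta_{q-1}]/\mathbb{Z}_2}\bigl(\omega(a)\,G(\omega^{-1})^{2}\bigr)$. The refined Stickelberger congruence (the leading-coefficient version, with uniformiser $\pi=\zeta_2-1=-2$) gives $G(\omega^{-1})/2\equiv 1\pmod 2$ in $\f{2^n}$, hence $G(\omega^{-1})^{2}=4w_0$ with $w_0\equiv1$. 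As the reduction of the local trace is the field trace $\Tr$ and $\omega(a)\equiv a$, the surviving sum equals $4R$ with $R\equiv \Tr(a)\pmod 2$. Finally, for $n\ge3$ we have $q\equiv0\pmod8$, so $(q-1)^{-1}\equiv -1\pmod 8$, whence $\mathcal{K}_{2^n}(a)\equiv 1-(1+4R)\equiv 4R\equiv 4\,\Tr(a)\pmod 8$, which is exactly the claimed dichotomy.

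The main obstacle is this last analytic input: pinning down not merely the $2$-adic valuation but the leading coefficient of the weight-one Gauss sum, i.e. proving $G(\omega^{-1})^{2}\equiv 4\pmod 8$ (equivalently $G(\omega^{-1})/2\equiv 1$ in $\f{2^n}$) via Stickelberger's congruence, for which the Gross--Koblitz formula supplies the precise unit part should more precision be required later. Secondary care is needed to justify the mod-$8$ truncation inside $\mathbb{Z}_2[\zeta_{q-1}]$ — using $8\mathbb{Z}_2[\zeta_{q-1}]\cap\mathbb{Z}_2=8\mathbb{Z}_2$ so that the reduction lands in $\mathbb{Z}/8$ and the weight-$\ge2$ terms genuinely drop — and to identify the local trace with $\Tr$ upon reduction; the small cases $n\le 2$ I would simply check by hand.
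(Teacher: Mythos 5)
Your proof is correct and takes essentially the same route as the paper: the expansion of $\mathcal{K}(a)$ into squares of Gauss sums (the paper's congruence \eqref{firstKL}), Stickelberger's theorem to discard the indices of weight at least $2$ and to identify the unit part of the weight-one Gauss sums, and the reduction of the surviving sum to $\Tr(a)$ modulo $2$. The only difference is organisational: the paper outsources this computation at the mod-$8$ level to Lemma 1 of \cite{LLMZ}, whereas you carry it out directly, which is precisely what the paper itself does one level higher in its proof of Theorem \ref{thm::mod16}.
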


This paper will improve Theorem \ref{mod8} to higher levels, i.e., modulo $2^4$,
in the sense of 
describing the residue class of $\mathcal{K}_{2^n}(a)$ modulo $2^4$ in terms of $a$.
 We will define the quadratic sum  
\[
Q(a) := \sum_{0 \le i < j < n} a^{2^i+2^j}.
\] 
While the trace map $\Tr(a)$ is the sum of all linear powers of $a$, 
the sum $Q(a)$ is the sum of all quadratic powers of $a$. 
Using Stickelberger's theorem we will improve 
the Helleseth-Zinoviev result one level further to the modulus $2^4$.
We will prove the following theorem.

\bigskip

\begin{thm}\label{mainthm}
For $a \in \f{2^n}$, 
\begin{equation*}
\mathcal{K}(a)\equiv \left\{
	\begin{array}{rccl}
			0 \pmod{16}& \text{ if } &\Tr(a) = 0 \text{ and } &Q(a) = 0,\\
			4\pmod{16}& \text{ if } &\Tr(a) = 1 \text{ and } &Q(a) = 1,\\
			8\pmod{16}& \text{ if } &\Tr(a) = 0 \text{ and } &Q(a) = 1,\\
			12\pmod{16}& \text{ if } &\Tr(a) = 1 \text{ and } &Q(a) = 0.
		\end{array} \right.
		\end{equation*}
\end{thm}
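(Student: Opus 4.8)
The plan is to pass from the mixed additive/multiplicative sum $\mathcal{K}(a)$ to a sum of squares of Gauss sums, and then to read off the residue modulo $16$ by controlling those Gauss sums $2$-adically. Write $q = 2^n$, let $\psi(x) = (-1)^{\Tr(x)}$ be the canonical additive character, let $\omega$ be the Teichm\"uller character of $\f{2^n}^{*}$, and set $g(\chi) = \sum_{x \in \f{2^n}^{*}} \chi(x)\psi(x)$. Substituting $x \mapsto x^{-1}$ gives $\mathcal{K}(a) = 1 + \sum_{xy = a}\psi(x+y)$, and since the multiplicative Fourier transform of $a \mapsto \sum_{xy=a}\psi(x+y)$ is $\chi \mapsto g(\chi)^2$, inversion yields
\[
\mathcal{K}(a) = 1 + \frac{1}{q-1}\sum_{t=0}^{q-2} g(\omega^{-t})^2\, \omega^{t}(a),
\]
an identity in $\mathbb{Z}[\zeta_{q-1}]$ which I would reduce modulo $16$ through a $2$-adic embedding, noting that $\tfrac{1}{q-1}$ is a $2$-adic unit.

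The first main step is to discard almost every term. By Stickelberger's theorem the $2$-adic valuation of $g(\omega^{-t})$ equals the binary Hamming weight $\wt(t)$, so $v_2\big(g(\omega^{-t})^2\big) = 2\,\wt(t)$. Hence every $t$ with $\wt(t) \ge 2$ contributes $0$ modulo $16$, leaving only $t=0$ and the weight-one indices $t = 2^i$. The $t=0$ term equals $\tfrac{1}{q-1}g(1)^2 = \tfrac{1}{q-1}$, which cancels the leading $1$ modulo $16$. For the weight-one terms the Frobenius invariance $g(\chi^2) = g(\chi)$ forces $g(\omega^{-2^i}) = g(\omega^{-1})$ for all $i$, so their total contribution is $\tfrac{1}{q-1}\,g(\omega^{-1})^2\, T$, where $T := \sum_{i=0}^{n-1}\omega(a)^{2^i}$ is the lifted trace of $a$, a $2$-adic integer reducing to $\Tr(a)$ modulo $2$. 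Thus $\mathcal{K}(a) \equiv \tfrac{1}{q-1}\, g(\omega^{-1})^2\, T \pmod{16}$.

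The quadratic function $Q$ then enters through the arithmetic of $T$ modulo $4$. Writing $\alpha = \omega(a)$ and using $\alpha^{2^n} = \alpha$, the power sums of $\{\alpha^{2^i}\}$ satisfy $p_1 = p_2 = T$, so Newton's identity $p_2 = p_1^2 - 2e_2$ gives the relation $T^2 - T = 2e_2$, where $e_2 = \sum_{i<j}\omega(a)^{2^i+2^j}$ reduces modulo $2$ to $Q(a)$. Reducing $T^2 - T = 2e_2$ modulo $4$ and using $t_0^2 \equiv t_0$ for $t_0 = \Tr(a) \in \{0,1\}$ yields the key congruence
\[
T \equiv \Tr(a) + 2\,Q(a) \pmod{4}.
\]
This is precisely the mechanism by which a single \emph{linear} surviving stratum produces the \emph{quadratic} invariant $Q(a)$ in the final answer.

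It remains to pin down $g(\omega^{-1})^2$ to sufficient $2$-adic precision, and this is where I expect the real work. Since $v_2\big(g(\omega^{-1})^2\big) = 2$, I only need its value modulo $16$, equivalently its unit part modulo $4$; the Gross--Koblitz formula expresses $g(\omega^{-1})$ in terms of a value of the $2$-adic Gamma function, and the expected conclusion is $g(\omega^{-1})^2 \equiv 4 \pmod{16}$. Granting this, $\tfrac{1}{q-1}\equiv -1 \pmod{16}$ gives $\mathcal{K}(a) \equiv -4T \pmod{16}$, and substituting $T \equiv \Tr(a) + 2Q(a) \pmod{4}$ reproduces the four cases of the theorem after a short case check. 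The main obstacle is therefore the exact Gross--Koblitz/Gamma evaluation of $g(\omega^{-1})^2$ to precision $16$, together with isolating the small values of $n$ (where $q-1 \not\equiv -1 \pmod{16}$ and the weight strata degenerate), which I would handle directly.
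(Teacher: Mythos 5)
Your proposal follows essentially the same route as the paper: the multiplicative-character expansion of $\mathcal{K}(a)$ into squared Gauss sums, Stickelberger's theorem to discard all indices of weight at least $2$, and the identity $\tfr{\Tr}(a)^2-\tfr{\Tr}(a)=2\tfr{Q}(a)$ (your Newton-identity computation is the same calculation) to convert $\tfr{\Tr}(a)\bmod 4$ into the pair $(\Tr(a),Q(a))$. The one step you defer as ``the real work'' --- establishing $g(\omega^{-1})^2\equiv 4\pmod{16}$ --- requires no Gross--Koblitz or Gamma-function input: Stickelberger already gives $g(j)\equiv 2\pmod{4}$ for $j$ of weight one, so $g(j)^2=4u^2$ with $u$ an odd $2$-adic integer and hence $g(j)^2\equiv 4\pmod{32}$; the Gross--Koblitz formula is only needed for the finer evaluation $g(1)\equiv 6\pmod{16}$ that drives the modulo-$64$ theorem.
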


We mention a recent result due to Lisonek \cite{LisonekSeta}
that gives a description of the 
elements $a \in \f{2^n}$ for which $\mathcal{K}(a) \equiv 0 \pmod{16}$:
\bigskip

\begin{thm}
Let $n \ge 4$. For any $a \in \f{2^n}, \mathcal{K}(a)$ is divisible by $16$ if and only if $\Tr(a) = 0$ and $\Tr(y) = 0$ where $y^2 + ay + a^3 = 0$.  
\end{thm}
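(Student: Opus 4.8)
The plan is to deduce this statement from Theorem~\ref{mainthm} rather than to reprove it from scratch. Theorem~\ref{mainthm} already asserts that $\mathcal{K}(a) \equiv 0 \pmod{16}$ holds precisely when $\Tr(a) = 0$ and $Q(a) = 0$. Since both this criterion and the one in the statement above impose $\Tr(a) = 0$, everything reduces to a single identity: whenever $\Tr(a) = 0$, the quadratic sum $Q(a)$ should equal $\Tr(y)$, where $y$ is a root of $y^2 + ay + a^3 = 0$. The degenerate case $a = 0$ is trivial (then $y = 0$ and $Q(0) = 0$), so I would assume $a \neq 0$ from here on.

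First I would recast the auxiliary equation in Artin--Schreier form. Dividing $y^2 + ay + a^3 = 0$ by $a^2$ and setting $z = y/a$ turns it into $z^2 + z = a$. Over $\f{2^n}$ this equation is solvable exactly when $\Tr(a) = 0$, which is the case at hand; its two roots are $z$ and $z+1$, giving $y = az$ and $y = az + a$. Since $\Tr(az + a) = \Tr(az) + \Tr(a) = \Tr(az)$, the value $\Tr(y)$ is independent of the chosen root, so the condition $\Tr(y) = 0$ is well defined.

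The core of the argument is the identity $\Tr(az) = Q(a)$. I would prove it by iterating the Frobenius map on $z^2 = z + a$: a short induction yields $z^{2^k} = z + \sum_{m=0}^{k-1} a^{2^m}$ for $0 \le k < n$. Feeding this into $\Tr(az) = \sum_{k=0}^{n-1} a^{2^k} z^{2^k}$ separates the sum into $z\,\Tr(a)$ and $\sum_{0 \le m < k < n} a^{2^m + 2^k}$. The first term is $0$ because $\Tr(a) = 0$, and the second is exactly $Q(a)$. Thus $\Tr(y) = \Tr(az) = Q(a)$, and substituting $Q(a) = \Tr(y)$ into the criterion of Theorem~\ref{mainthm} gives the stated equivalence.

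Everything here is elementary; the only real idea is the normalization $z = y/a$, after which the Frobenius iteration is mechanical. The step most in need of care is the index bookkeeping that identifies $\sum_{0 \le m < k < n} a^{2^m + 2^k}$ with $Q(a)$ over the correct range, together with the companion check that $Q(a)$ is genuinely $\f{2}$-valued — which follows from $Q(a)^2 = Q(a)$ after a cyclic reindexing of the exponents modulo $n$. Beyond Theorem~\ref{mainthm} and the solvability criterion for $z^2 + z = a$, no further machinery is required.
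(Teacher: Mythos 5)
Your argument is correct, but note that the paper does not prove this statement at all: it is quoted verbatim from \cite{LisonekSeta} as an external result, stated immediately after Theorem~\ref{mainthm} presumably to highlight the parallel, and the authors never reconcile the two criteria. Your proposal fills exactly that gap. The substitution $y=az$ does turn $y^2+ay+a^3=0$ into $z^2+z=a$, the induction $z^{2^k}=z+\sum_{m=0}^{k-1}a^{2^m}$ is sound, and expanding $\Tr(az)=\sum_k a^{2^k}z^{2^k}$ correctly yields $z\,\Tr(a)+\sum_{0\le m<k<n}a^{2^m+2^k}=Q(a)$ under the hypothesis $\Tr(a)=0$; I verified the identity on a small example in $\mathbb{F}_8$. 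Your well-definedness check ($\Tr(az+a)=\Tr(az)$ when $\Tr(a)=0$) and the observation that the conjunct $\Tr(a)=0$ guarantees the existence of $y$ in $\mathbb{F}_{2^n}$ are both necessary and present. Lison\v{e}k's own route in the cited paper is entirely different (it does not pass through Stickelberger-type congruences or the quadratic sum $Q$), so what your derivation buys is a short, purely finite-field proof that the condition $\Tr(y)=0$ is literally the condition $Q(a)=0$ in disguise --- a bridge the paper leaves implicit. The restriction $n\ge 4$ in the statement plays no role in your argument, which is fine since Theorem~\ref{mainthm} carries no such restriction.
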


In Sections \ref{sect::Stickelberger} and \ref{sect::Fourier}, we will introduce the techniques we use. In Section \ref{sect::Mod8} we give an alternative proof of  Theorem \ref{mod8} using our techniques.  
We  prove Theorem \ref{mainthm} in Section \ref{sect::Mod16}. 
In Section \ref{sect::Mod48} we will combine Theorem \ref{mainthm} with the result concerning Kloosterman sums modulo $3$ to achieve the complete characterisation modulo $48$. 
Finally, in Section \ref{sect::Mod64} we will employ the Gross-Koblitz formula to characterize the values of Kloosterman sums modulo $64$ in terms of 
the {\em lifted trace} that we will introduce in Section \ref{sect::Mod16}. 

We give a few remarks about the ternary case. 
It is easy to see that ternary Kloosterman sums are divisible by $3$, i.e.,
for all $a \in \f{3^n}$,
\begin{equation}\label{folklore2}
\mathcal{K}_{3^n}(a) \equiv 0 \pmod{3}.
\end{equation}

Ternary Kloosterman sums satisfy (see Katz and Livn\'e  \cite{Katz})
\[
-2\sqrt{3^n} < \mathcal{K}_{3^n}(a) < 2\sqrt{3^n}
\]
and take every value which is congruent to $0$ modulo $3$ in that range.

In a recent paper, we used Stickelberger's theorem to prove the following 
result on ternary Kloosterman sums, which improved  \eqref{folklore2} 
one level higher.

\bigskip

\begin{thm}\cite{SetaFGR}
For $a \in \f{3^n}$,
\[
\mathcal{K}_{3^n}(a) \equiv \left\{ \begin{array}{ll} 
0 \ (\mathrm{mod} \ 9) & \textrm{if } \Tr(a) = 0,\\
3 \ (\mathrm{mod} \ 9) & \textrm{if } \Tr(a) = 1,\\ 
6 \ (\mathrm{mod} \ 9) & \textrm{if } \Tr(a) = 2. 
\end{array}
\right. 
\]
\end{thm}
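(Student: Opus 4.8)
The plan is to mirror the Fourier/Stickelberger method used for the binary theorems, now at the prime $3$. First I would express the Kloosterman sum through Gauss sums. Writing $\psi$ for the canonical additive character $\psi(x)=\zeta^{\Tr(x)}$, $\omega$ for the Teichm\"uller character of $\f{3^n}^{\ast}$, and $G(\chi)=\sum_{x\neq0}\chi(x)\psi(x)$ for the Gauss sum, Fourier inversion on the multiplicative group (exactly as in Section \ref{sect::Fourier}) gives
\begin{equation*}
\mathcal{K}_{3^n}(a)=1+\frac{1}{3^n-1}\sum_{s=0}^{3^n-2}G(\omega^{-s})^2\,\omega^{s}(a).
\end{equation*}
I would then pass to the ring of integers of $\mathbb{Q}_3(\zeta,\zeta_{3^n-1})$, using the uniformizer $\pi=\zeta-1$ above $3$, for which $\pi^2=-3\zeta$; thus a congruence modulo $9$ is a congruence modulo $\pi^4$ (up to a unit), which is the arena in which Stickelberger's theorem operates.

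The second step is the Stickelberger truncation. By Stickelberger's theorem $v_\pi\bigl(G(\omega^{-s})\bigr)$ equals the base-$3$ digit sum $w(s)$, so $v_\pi\bigl(G(\omega^{-s})^2\bigr)=2w(s)$. Terms with $w(s)\geq2$ then have $\pi$-valuation at least $4$ and vanish modulo $9$, leaving only $s=0$ and the $n$ indices $s=3^k$ with $w(s)=1$. The $s=0$ term contributes $G(\chi_0)^2=(-1)^2=1$. For the weight-one terms I would invoke the Davenport--Hasse/Frobenius identity $G(\chi^3)=G(\chi)$, which collapses all of them into $G(\omega^{-3^k})=G(\omega^{-1})$. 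Hence, modulo $9$,
\begin{equation*}
\mathcal{K}_{3^n}(a)\equiv 1+\frac{1}{3^n-1}\Bigl(1+G(\omega^{-1})^2\,S(a)\Bigr),\qquad S(a):=\sum_{k=0}^{n-1}\omega(a)^{3^k}.
\end{equation*}
Next I would identify $S(a)$. Since the Frobenius $\zeta_{3^n-1}\mapsto\zeta_{3^n-1}^{3}$ generates the unramified part of the Galois group, $S(a)$ is the trace of the Teichm\"uller lift $\omega(a)$ from the unramified extension of degree $n$ down to $\mathbb{Q}_3$; it therefore lies in $\mathbb{Z}_3$ and reduces to $\sum_k a^{3^k}=\Tr(a)$ modulo $3$, so that $3\,S(a)\equiv3\,\Tr(a)\pmod{9}$. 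Assuming $n\geq2$ (the case $n=1$ being checked directly, where the statement must be read separately), $3^n-1\equiv-1$, so $\tfrac{1}{3^n-1}\equiv-1\pmod{9}$ and the display collapses to $\mathcal{K}_{3^n}(a)\equiv-G(\omega^{-1})^2\,S(a)\pmod{9}$.

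The crux---and the step I expect to be the main obstacle---is pinning down $G(\omega^{-1})^2$ modulo $9$. The leading Stickelberger term only gives $G(\omega^{-1})\equiv-\pi\pmod{\pi^2}$, whence $G(\omega^{-1})^2\equiv-3\pmod{\pi^3}$; this determines the Gauss sum one power of $\pi$ short of the $\pi^4$ precision a congruence mod $9$ demands, so I need the next $3$-adic digit of the Gauss sum. I would extract it either from a refined Stickelberger congruence or, cleanly, from the Gross--Koblitz formula (the same tool employed in Section \ref{sect::Mod64}), aiming at $G(\omega^{-1})^2\equiv-3\pmod{9}$; the exact identity $G(\omega^{-1})=\sqrt{-3}$ in the case $n=1$, which gives precisely $-3$, is the model to generalize. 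A subtlety to handle here is that $G(\omega^{-1})^2$ a priori lies in $\mathbb{Q}_3(\zeta)$ rather than in $\mathbb{Q}_3$; the non-rational part must be shown to be divisible by $9$, which is forced by the integrality of $\mathcal{K}_{3^n}(a)$ together with $S(a)\in\mathbb{Z}_3$. Granting $G(\omega^{-1})^2\equiv-3\pmod{9}$, the previous display yields $\mathcal{K}_{3^n}(a)\equiv3\,S(a)\equiv3\,\Tr(a)\pmod{9}$, which is exactly the claimed trichotomy $0,3,6$ according as $\Tr(a)=0,1,2$.
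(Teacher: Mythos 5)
This theorem is quoted from \cite{SetaFGR} and not reproved in the present paper, but your proposal is correct and is precisely the $p=3$ instance of the Stickelberger--Fourier machinery set up in Sections \ref{sect::Stickelberger} and \ref{sect::Fourier} (expansion \eqref{firstKL}, truncation by $\pi$-adic weight, Frobenius-invariance collapsing the weight-one Gauss sums, and identification of $\sum_k\omega(a)^{3^k}$ with a lift of $\Tr(a)$). The one step you single out as the main obstacle --- pinning down $G(\omega^{-1})^2$ modulo $9$ rather than merely modulo $\pi^3$ --- is already supplied by Theorem \ref{Stick+}: for $p=3$ and $j$ of weight one it gives $g(j)\equiv\pi\pmod{\pi^{1+2}}$, hence $g(j)^2\equiv\pi^2=-3\pmod{\pi^4}$ with $\pi^4=9$, so no separate appeal to the Gross--Koblitz formula is required.
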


\section{Stickelberger's theorem}\label{sect::Stickelberger}

Let $p$ be a prime (in Section \ref{sect::Mod8} we set $p=2$) and let $q = p^n$.
We consider multiplicative characters taking their values in an algebraic extension of 
the $p$-adic numbers $\mathbb{Q}_p$. 
Let $\xi$ be a primitive $(q-1)^{\text{th}}$ root of unity in a 
fixed algebraic closure of  $\mathbb{Q}_p$.  
The group of multiplicative characters of $\mathbb{F}_q$ (denoted $
\widehat{\mathbb{F}_q^{\times}}$) is cyclic of order $q-1$. 
The group $\widehat{\mathbb{F}_q^
{\times}}$ is generated by the Teichm\"uller character $\omega : \mathbb{F}_q^\times \to 
\mathbb{Q}_p(\xi)$, which, for a fixed generator $t$ of $\mathbb{F}_q^{\times}$, is defined 
by 
\[
\omega (t^j) = \xi^j.
\] 
We extend $\omega$ to $\mathbb{F}_q$ by setting $\omega (0)$ to be 0.

Let $\zeta$ be a  primitive $p$-th root of unity in the 
fixed algebraic closure of  $\mathbb{Q}_p$.  
Let $\mu$ be the canonical additive character of $\mathbb{F}_q$,
\[
\mu(x) = \zeta^{\Tr (x)}.
\] 

The Gauss sum (see \cite{LN,Wash}) of a character $\chi \in \widehat{\mathbb{F}_q^
{\times}}$  is defined as \[\tau(\chi) = -\sum_{x \in \mathbb{F}_q}\chi(x)\mu(x)\,.\]

For any positive integer $j$, let $\wt_p(j)$ denote the $p$-weight of $j$, i.e.,
\[ \wt_p(j)=\sum_i j_i
\] 
where $\sum_i j_ip^i$ is the $p$-ary
expansion of $j$. Just for shorthand notation we define \[g(j):=\tau(\omega^{-j})=\tau(\bar{\omega}^j)\,.\]

Let $\pi$ be the unique ($p-1$)th root of $-p$ in $\mathbb{Q}_p(\xi,\zeta)$ satisfying \[\pi \equiv \zeta - 1 \pmod{\pi^{2}}\,.\]
Wan \cite{Wan}
noted that the following improved version of Stickelberger's theorem is a direct consequence of the Gross-Koblitz formula \cite{GK, Rob}.
\bigskip

\begin{thm}\cite{Wan}\label{Stick+}
Let $1 \le j < q-1$ and let $j =j_0+j_1p+\dots + j_{n-1}p^{n-1}$. Then 
\[g(j) \equiv \frac{\pi^{\wt_p(j)}}{j_{0}!\cdots j_{n-1}!} \pmod{\pi^{\wt_p(j)+p-1}}\,.\]
\end{thm}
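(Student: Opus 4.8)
The plan is to deduce the congruence directly from the Gross--Koblitz formula, which is the tool the theorem is attributed to. Since $j_0!\cdots j_{n-1}!$ is a product of factorials of digits $j_i\le p-1$ it is a $p$-adic unit, so dividing the asserted congruence through by $\pi^{\wt_p(j)}$ reduces the claim to
\[
\frac{g(j)}{\pi^{\wt_p(j)}}\equiv \frac{1}{j_0!\cdots j_{n-1}!}\pmod{\pi^{p-1}}.
\]
Because $\pi^{p-1}=-p$ generates the same ideal as $p$ in the ring of integers of $\mathbb{Q}_p(\xi,\zeta)$, this is a congruence modulo $p$, and it is exactly this sharpening from modulus $\pi$ (classical Stickelberger) to modulus $\pi^{p-1}=p$ that produces the improved error term $\pi^{\wt_p(j)+p-1}$.

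First I would write down the Gross--Koblitz formula for $g(j)=\tau(\bar\omega^j)$, which expresses it as $\pm\pi^{\wt_p(j)}$ times a product $\prod_{i=0}^{n-1}\Gamma_p(x_i)$ of Morita $p$-adic Gamma values, where each argument is $x_i=\langle p^i j\rangle/(q-1)$ and $\langle\,\cdot\,\rangle$ denotes reduction modulo $q-1$. The exponent of $\pi$ is precisely the digit sum $\wt_p(j)$, so the powers of $\pi$ already match on both sides and the entire content is to evaluate the product of Gamma factors modulo $p$.

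The key step is the reduction of each factor. Multiplication by $p^i$ cyclically permutes the base-$p$ digits of $j$, so as $i$ varies the lowest digit of $\langle p^i j\rangle$ runs over $j_0,\dots,j_{n-1}$; since $q-1$ is a $p$-adic unit with $q-1\equiv-1\pmod p$, one gets $x_i\equiv -j_{\rho(i)}\pmod p$ for a permutation $\rho$ of the digits. Invoking the standard congruence-preserving property of $\Gamma_p$ (if $x\equiv y\pmod p$ then $\Gamma_p(x)\equiv\Gamma_p(y)\pmod p$) gives $\Gamma_p(x_i)\equiv\Gamma_p(-j_{\rho(i)})\pmod p$. It then remains to compute $\Gamma_p(-d)$ modulo $p$ for a digit $0\le d\le p-1$: writing $-d\equiv p-d\pmod p$ and combining $\Gamma_p(m)=(-1)^m(m-1)!$ for $1\le m\le p-1$ with Wilson's theorem $(p-1)!\equiv-1\pmod p$, a short computation yields $\Gamma_p(-d)\equiv 1/d!\pmod p$. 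Taking the product over $i$ gives $\prod_i\Gamma_p(x_i)\equiv 1/(j_0!\cdots j_{n-1}!)\pmod p$.

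Finally I would reassemble the pieces, restoring the factor $\pi^{\wt_p(j)}$ and matching the sign in Gross--Koblitz against the paper's normalisation $\tau(\chi)=-\sum_x\chi(x)\mu(x)$, whose built-in minus sign cancels the one in the usual statement so that the overall constant comes out to $+1$. The main obstacle is precisely this bookkeeping: pinning down the exact arguments of the Gamma factors, verifying that their residues modulo $p$ are the negatives of the digits, and tracking the three sources of sign (Gross--Koblitz, Wilson's theorem, and the definition of $\tau$) so that they cancel to give the clean statement. The one genuinely substantive ingredient beyond this routine bookkeeping is the evaluation $\Gamma_p(-d)\equiv 1/d!\pmod p$, which is what converts the analytic Gross--Koblitz identity into the arithmetic factorials appearing in the theorem.
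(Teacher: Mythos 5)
Your proposal is correct and follows exactly the route the paper indicates: the paper cites this result to Wan and states only that it is a direct consequence of the Gross--Koblitz formula, which is precisely the derivation you carry out (digit-permutation of the arguments $\left\langle p^i j/(q-1)\right\rangle$, the congruence $x_i\equiv -j_{\rho(i)}\pmod p$, the evaluation $\Gamma_p(-d)\equiv 1/d!\pmod p$ via Wilson's theorem, and the sign bookkeeping against the normalisation $\tau(\chi)=-\sum_x\chi(x)\mu(x)$). The details you supply are sound, so this is the same approach with the omitted verification filled in.
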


Stickelberger's theorem, as usually stated,  is the same congruence modulo $\pi^{\wt_p(j)+1}$.
Note that when $p=2$, which is the case in this paper, Theorem \ref{Stick+}
is the same as this original Stickelberger theorem.

We have (see \cite{GK}) that
$(\pi)$ is the unique prime ideal of $\mathbb{Q}_{p}(\zeta, \xi)$ lying above $p$. 
Since $\mathbb{Q}_{p}(\zeta, \xi)$ is an unramified extension of $\mathbb{Q}_{p}(\zeta)$,
a totally ramified (degree $p-1$) extension of $\mathbb{Q}_{p}$, it follows that $(\pi)^{p-1} = (p)$
and $\nu_p(\pi) = \frac{1}{p-1}$.
Here $\nu_p$ denotes the $p$-adic valuation.

Therefore Theorem \ref{Stick+} implies that
$\nu_{\pi}(g(j)) = \wt_p(j)$, and because 
$\nu_{p}(g(j)) =\nu_{\pi}(g(j)) \cdot \nu_p (\pi)$ we get 
\begin{equation}\label{val}
\nu_p(g(j)) = \frac{\wt_p(j)}{p-1}\,.\end{equation}

In this paper we have $p=2$.  In that case, $\pi=-2$ and  
equation \eqref{val} becomes
 \begin{equation}\label{val2}
\nu_2(g(j)) = \wt_2(j)\,.\end{equation}

\section{Fourier Analysis}\label{sect::Fourier}

The Fourier transform of a function $f:\mathbb{F}_q \to \mathbb{C}$ at $a \in \mathbb{F}_q$ 
is defined to be
\[\tfr{f}(a) = \sum_{x \in \mathbb{F}_q} f(x)\mu(ax)\,.\]
The complex number $\tfr{f}(a) $ is called the Fourier coefficient of $f$ at $a$.

Consider monomial functions defined by $f(x) = \mu(x^d)$. 
When $d=-1$ we have $\tfr{f}(a) = \mathcal{K}_{p^n}(a)$.
By a similar 
Fourier analysis argument to that in Katz \cite{KatzGaussSums} or
Langevin-Leander \cite{LL}, for any $d$ we have
\[
\tfr{f}(a) =\frac{q}{q-1}+ \frac{1}{q-1} \sum_{j=1}^{q-2}\tau(\bar{\omega}^j)\ \tau
(\omega^{jd})\ \bar{\omega}^{jd}(a) 
\]
and hence
\[
\tfr{f}(a) \equiv - \sum_{j=1}^{q-2}\tau(\bar{\omega}^j)\ \tau
(\omega^{jd})\ \bar{\omega}^{jd}(a) \pmod{q}\,.
\]
We will use this to obtain congruence information about  Kloosterman sums.
Putting $d=-1=p^n-2$, the previous congruence becomes

\begin{equation}\label{firstKL}
\mathcal{K}(a) \equiv - \sum_{j=1}^{q-2}(g(j))^2\ \omega^{j}(a) \pmod{q}.
\end{equation}
Equation (\ref{val2}) gives the $2$-adic valuation of the Gauss sums $g(j)$,
and the $2$-adic valuation of each term in  equation (\ref{firstKL}) follows.
Our proofs will consider (\ref{firstKL}) at various levels, i.e., modulo $2^3$, $2^4$ and $2^6$.

\section{Binary Kloosterman sums modulo 8}\label{sect::Mod8}

Let $q = 2^n$ for some integer $n\geq 2$.

To warm up we shall give a new proof of the following result due to Helleseth 
and Zinoviev \cite{HZ}.
This is equivalent to Theorem \ref{mod8}.

\bigskip

\begin{thm}\label{mod8lem}
For $a\in \mathbb{F}_q$, $\mathcal{K}(a) \equiv 0 \pmod{8}$ if and only if  $\Tr (a) = 0$.
\end{thm}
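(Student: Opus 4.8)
The plan is to start from the congruence \eqref{firstKL}, namely
\[
\mathcal{K}(a) \equiv - \sum_{j=1}^{q-2}(g(j))^2\,\omega^{j}(a) \pmod{q},
\]
and to extract its residue modulo $8$ by discarding every term whose $2$-adic valuation is too large. By \eqref{val2} we have $\nu_2(g(j)) = \wt_2(j)$, so $\nu_2\big((g(j))^2\big) = 2\,\wt_2(j)$. Since $\omega^j(a)$ is a unit (for $a \neq 0$), a term survives modulo $8$ only when $2\,\wt_2(j) \le 2$, i.e.\ when $\wt_2(j) = 1$. Thus the whole sum collapses modulo $8$ to the contribution of the $n$ indices $j = 2^i$, $0 \le i \le n-1$, which are exactly the powers of two in the range $1 \le j \le q-2$. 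The first step, therefore, is to write
\[
\mathcal{K}(a) \equiv - \sum_{i=0}^{n-1} \big(g(2^i)\big)^2\,\omega^{2^i}(a) \pmod{8}.
\]

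The second step is to evaluate $g(2^i)$ precisely enough. Because the Teichm\"uller character satisfies $\omega^{2^i}(a) = \omega(a)^{2^i}$ and Gauss sums obey $\tau(\chi^p) = \tau(\chi)$ over characteristic $p$ (the Frobenius stability $g(2^i) = g(1)$), all the surviving coefficients $(g(2^i))^2$ are equal to $(g(1))^2$, and $g(1) = \tau(\bar\omega)$ has valuation $\wt_2(1)=1$, so $(g(1))^2$ has valuation $2$. Writing $(g(1))^2 = 4u$ for a unit $u$ (indeed, one checks $g(1)^2 \equiv 4 \pmod{8}$ using Stickelberger, since $\pi = -2$ and the leading term of Theorem \ref{Stick+} gives $g(1) \equiv \pi = -2$), the congruence becomes
\[
\mathcal{K}(a) \equiv -4 \sum_{i=0}^{n-1} \omega^{2^i}(a) \pmod{8}.
\]

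The third step is to recognise the exponential sum $\sum_{i=0}^{n-1}\omega^{2^i}(a) = \sum_{i=0}^{n-1}\omega(a)^{2^i}$ as the $2$-adic Teichm\"uller lift of the absolute trace. Reducing modulo the prime $(\pi)=(2)$, each $\omega(a)^{2^i}$ reduces to $a^{2^i}\in\mathbb{F}_q$, so the sum reduces to $\Tr(a)\in\mathbb{F}_2$; hence $\sum_i \omega^{2^i}(a)$ is a $2$-adic integer congruent to $\Tr(a)$ (viewed as $0$ or $1$) modulo $2$. Multiplying by $4$, the quantity $4\sum_i\omega^{2^i}(a)$ is then congruent to $4\,\Tr(a)$ modulo $8$. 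This yields $\mathcal{K}(a)\equiv -4\,\Tr(a)\equiv 4\,\Tr(a)\pmod 8$, which is $0$ exactly when $\Tr(a)=0$ and $4$ when $\Tr(a)=1$, giving both the divisibility statement and, as a bonus, Theorem \ref{mod8} itself. The case $a=0$ is handled separately, where $\mathcal{K}(0)=q\equiv 0\pmod 8$ for $n\ge 3$ and $\Tr(0)=0$.

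The main obstacle I anticipate is the bookkeeping in the second step: one must verify that $g(2^i)=g(1)$ uniformly so that the $n$ surviving terms genuinely share a common factor, and one must pin down $(g(1))^2$ modulo $8$ (not merely its valuation) to fix the constant as exactly $4$ rather than some other multiple of $4$; the sign and the precise unit matter for distinguishing the residues $0$ and $4$. Establishing that $\sum_i\omega^{2^i}(a)$ reduces correctly to the finite-field trace — i.e.\ that the Teichm\"uller lift commutes with Frobenius and reduction — is conceptually the crux, but once Stickelberger's valuation formula \eqref{val2} has done the truncation, the remaining estimates are routine $2$-adic arithmetic.
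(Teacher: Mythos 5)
Your proof is correct, but it takes a different route from the one the paper gives for this theorem. The paper's proof is essentially a citation: it invokes Lemma~1 of \cite{LLMZ} (stated as \eqref{lemma1ofLLMZ}), computes $M_{-1}=2$ and $J_{-1}=W_1$, and concludes that $8\mid\mathcal{K}(a)$ iff $\sum_{j\in W_1}a^j=\Tr(a)=0$. You instead work directly from the congruence \eqref{firstKL}: truncate using the valuation formula \eqref{val2} so that only weight-one $j$ survive modulo $8$, use $g(2^i)=g(1)$ and Stickelberger (Theorem \ref{Stick+}) to pin down $g(1)^2\equiv 4\pmod{16}$, and reduce the lifted trace $\sum_i\omega(a)^{2^i}$ modulo $2$ to get $\mathcal{K}(a)\equiv 4\Tr(a)\pmod 8$. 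This is exactly the method the paper deploys in Section~\ref{sect::Mod16} for the modulus $16$, so you have in effect replaced the paper's Section~\ref{sect::Mod8} argument by a specialization of its Section~\ref{sect::Mod16} argument; what it buys you is the stronger conclusion of Theorem \ref{mod8} (the exact residue $0$ or $4$ modulo $8$), not merely the divisibility criterion, at the cost of redoing work the cited lemma packages. Two small corrections: the congruence \eqref{firstKL} is only modulo $q$, so your mod-$8$ truncation requires $n\ge 3$ (the case $q=4$ must be checked by hand); and $\mathcal{K}(0)$ is not $q$ but $0$, since the $x=0$ term contributes $1$ and $\sum_{y\ne 0}(-1)^{\Tr(y)}=-1$ --- the conclusion for $a=0$ still holds, but for the right reason.
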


\begin{proof}
If $f(x) = \mu(x^d)$ let
\[M_{d} = \min_{j\in\{1,2,\dots2^n-2\}}[\wt_2(j)+\wt_2(-jd)]\,,\]
and let
\[J_{d} = \left\{j\in\{1,2,\dots2^n-2\} : \wt_2(j)+\wt_2(-jd) = M_{d}\right\}\,.\]

Lemma 1 of \cite{LLMZ} states that if $f(x) = \mu(x^d)$, then 
\begin{equation}\label{lemma1ofLLMZ}
2^{M_d+1}\mid \tfr{f}(a) 
\iff \sum_{j \in J_d}a^{-jd} = 0.
\end{equation} 
 
 Let $d = -1$. Then $\tfr{f}(a)$ is the Kloosterman sum $\mathcal{K}(a)$
 on $\f{q}$, $M_{-1} = 2$, and \[J_{-1} = \left\{j\in\{1,2,\dots2^n-2\} : \wt_2(j) = 1\right\}.\] 
It follows that
\[\sum_{j \in J_{-1}}a^{j} = \Tr (a)\,,\]
and \eqref{lemma1ofLLMZ} implies that $8$ divides $\mathcal{K}(a)$ if and only if $\Tr (a)=0$.
\end{proof}


\section{Binary Kloosterman sums modulo 16}\label{sect::Mod16}

Again $q=2^n$.
For $i= 1,2,\dots$, let \[W_{i} = \left\{j\in\{1,2,\dots2^n-2\} : \wt_2(j) = i\right\}.\]
Then we may write
\[\Tr(a) = \sum_{j \in W_{1}}a^j.
\] 
Recall that $\omega : \mathbb{F}_q \longrightarrow \mathbb{Q}_2 (\xi)$ is the
Teichm\"uller character.

We define the \emph{lifted trace} 
$\tfr{\Tr}:\mathbb{F}_{q} \to \mathbb{Q}_2(\xi)$ by 
\[\tfr{\Tr}(a) = \sum_{j \in W_{1}}\omega(a^j)
\] 
and note that $\tfr{\Tr}(a) \equiv \Tr(a) \pmod{2}$. 

We define the \emph{quadratic trace} $Q: \mathbb{F}_q \longrightarrow \mathbb{F}_2$ by
\[Q(a) = \sum_{j \in W_{2}} a^{j}\] 
and define the  \emph{lifted quadratic trace}
$\tfr{Q}:\mathbb{F}_{q} \to \mathbb{Q}_2(\xi)$ by 
\[\tfr{Q}(a) = \sum_{j \in W_{2}}\omega(a^j).\] 
Then $\tfr{Q}(a) \equiv Q(a) \pmod{2}$.

Next we prove our theorem on $\mathcal{K}(a)$ mod 16.
\bigskip

\begin{thm}\label{thm::mod16} Let $q = 2^n$. For $a \in \f{q}$, 
\begin{equation*}
\mathcal{K}(a)\equiv \left\{
	\begin{array}{rccl}
			0 \pmod{16}& \text{ if } &\Tr(a) = 0 \text{ and } &Q(a) = 0,\\
			4\pmod{16}& \text{ if } &\Tr(a) = 1 \text{ and } &Q(a) = 1,\\
			8\pmod{16}& \text{ if } &\Tr(a) = 0 \text{ and } &Q(a) = 1,\\
			12\pmod{16}& \text{ if } &\Tr(a) = 1 \text{ and } &Q(a) = 0.
		\end{array} \right.
		\end{equation*}
\end{thm}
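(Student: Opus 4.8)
The plan is to start from the congruence \eqref{firstKL}, reduce it modulo $16$, collapse the surviving terms into the lifted trace $\tfr{\Tr}(a)$, and then pin down $\tfr{\Tr}(a)$ modulo $4$ by means of a quadratic identity linking $\tfr{\Tr}$ and $\tfr{Q}$. Throughout I would assume $n \ge 4$, so that $16 \mid q$ and \eqref{firstKL} may legitimately be read modulo $16$; the finitely many remaining small cases can be checked by hand. By \eqref{val2} the $j$-th term of \eqref{firstKL} has $2$-adic valuation $2\wt_2(j)$, so every index with $\wt_2(j) \ge 2$ contributes $0$ modulo $16$ and only the weight-one indices $j \in W_1 = \{2^i : 0 \le i \le n-1\}$ survive.

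The next step is to assemble these weight-one terms. Using the Frobenius-invariance of Gauss sums, $g(2j) = g(j)$ (equivalently $\tau(\chi^2) = \tau(\chi)$, which follows from the substitution $x \mapsto x^2$ together with $\Tr(x^2) = \Tr(x)$), all weight-one Gauss sums coincide, $g(2^i) = g(1)$. Since $\omega^{2^i}(a) = \omega(a^{2^i})$, the weight-one part of \eqref{firstKL} becomes $-(g(1))^2 \sum_{i=0}^{n-1}\omega(a^{2^i}) = -(g(1))^2\,\tfr{\Tr}(a)$. I would then pin down $(g(1))^2$ modulo $16$: Theorem \ref{Stick+} with $p=2$, $\pi = -2$, and all binary digit-factorials equal to $1$ gives $g(1) \equiv -2 \pmod 4$, and squaring $g(1) = -2 + 4m$ yields $(g(1))^2 \equiv 4 \pmod{16}$. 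Hence
\[
\mathcal{K}(a) \equiv -4\,\tfr{\Tr}(a) \pmod{16},
\]
so everything reduces to computing $\tfr{\Tr}(a)$ modulo $4$.

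For this I would exploit the structure of the Teichmüller lift. Writing $\alpha = \omega(a)$, one has $\tfr{\Tr}(a) = \sum_{i=0}^{n-1}\alpha^{2^i}$ and $\tfr{Q}(a) = \sum_{0\le i<j<n}\alpha^{2^i+2^j}$. Squaring the first sum and using $\alpha^{2^n} = \alpha$ to re-index the diagonal terms produces the \emph{key identity}
\[
\tfr{\Tr}(a)^2 = \tfr{\Tr}(a) + 2\,\tfr{Q}(a).
\]
Solving this modulo $4$ in the two cases $\Tr(a)=0$ and $\Tr(a)=1$ finishes the proof: writing $\tfr{\Tr}(a) = 2u$ respectively $\tfr{\Tr}(a) = 1+2u$, substituting into the identity, and reducing modulo $2$ (using $\tfr{Q}(a) \equiv Q(a) \pmod 2$) gives $u \equiv Q(a) \pmod 2$ in both cases, whence $\tfr{\Tr}(a) \equiv 2Q(a)$ or $1 + 2Q(a) \pmod 4$. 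Feeding these into $\mathcal{K}(a) \equiv -4\,\tfr{\Tr}(a)$ reproduces the residues $0, 8, 12, 4$ exactly as listed.

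I expect the main obstacle to be twofold. The first is the bookkeeping of the opening paragraph: justifying that only the weight-one terms matter modulo $16$ and correctly repackaging them into $-(g(1))^2\,\tfr{\Tr}(a)$ with the sharp constant $(g(1))^2 \equiv 4 \pmod{16}$. The second, more conceptual, is the realization that $\tfr{\Tr}(a)$ must be controlled to precision $4$ rather than merely modulo $2$; the quadratic identity $\tfr{\Tr}(a)^2 = \tfr{\Tr}(a) + 2\tfr{Q}(a)$ is precisely what supplies this extra precision, and recognizing that $\tfr{Q}$ (hence $Q$) is forced to appear here is the crux of the argument.
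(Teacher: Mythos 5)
Your proposal is correct and follows essentially the same route as the paper's own proof: reduce \eqref{firstKL} modulo $16$ via the Stickelberger valuation so that only the weight-one terms survive, obtain $\mathcal{K}(a) \equiv -4\,\tfr{\Tr}(a) \pmod{16}$, and then determine $\tfr{\Tr}(a)$ modulo $4$ from the identity $\tfr{\Tr}(a)^2 = \tfr{\Tr}(a) + 2\tfr{Q}(a)$ together with $\tfr{Q}(a) \equiv Q(a) \pmod 2$. The only (harmless) cosmetic differences are that you route the weight-one terms through $g(2^i)=g(1)$ where the paper applies $g(j)^2 \equiv 4 \pmod{16}$ to each $j \in W_1$ directly, and that you explicitly flag the $n \ge 4$ requirement for reading \eqref{firstKL} modulo $16$, a point the paper leaves implicit.
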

\begin{proof}
Let $q=2^n$ and let $a\in \mathbb{F}_q$.
As in the proof of Lemma \ref{mod8lem}, $\mathcal{K}(a) = \tfr{f}(a)$, where $f(x) = \mu(x^{-1})$. 
Stickelberger's theorem implies $g(j) \equiv 2^{\wt_2(j)}\pmod{2^{\wt_2(j)
+1}}$, so squaring gives 
\[(g(j))^2 \equiv 2^{2\wt_2(j)}\pmod{2^{2\wt_2(j)+2}}\,.\]
It follows that $g(j)^2 \equiv 4 \pmod{16}$ for $j$ of weight 1,
and $g(j)^2 \equiv 0 \pmod{16}$ for $j$ of weight at least 2.
Thus congruence \eqref{firstKL} modulo 16 gives
\[\mathcal{K}(a) \equiv - \sum_{j \in W_1}g(j)^2\omega^j(a) \pmod{16}\]
or in other words
\[\mathcal{K}(a) \equiv -4\ \tfr{\Tr}(a) \pmod{16}.\] 
It remains to  determine $\tfr{\Tr}(a) \bmod{4}$.

This can be done in terms of the $\f{q}$-sums $\Tr(a)$ and $Q(a)$ by noting that\begin{align*}
\tfr{\Tr}(a)^2 &= \sum_{j \in W_1}\sum_{k \in W_1}\omega(a^j)\omega(a^k)\\
&=\sum_{j, k \in W_1}\omega(a^{j+k})\\
&=2\sum_{i \in W_2}\omega(a^i) + \sum_{j \in W_1}\omega(a^j)\\
&= 2\tfr{Q}(a)+\tfr{\Tr}(a).
\end{align*}
However
\[\tfr{\Tr}(a)^2 \equiv 0 \pmod{4}\iff \tfr{\Tr}(a) \equiv 0 \pmod{2}
\iff \Tr (a) = 0\]
and 
\[\tfr{\Tr}(a)^2 \equiv 1 \pmod{4}\iff \tfr{\Tr}(a) \equiv 1 \pmod{2}
\iff \Tr (a) = 1.\]
Recalling that $\tfr{Q}(a) \equiv Q(a) \pmod{2}$, and observing that we only require $\tfr{Q}(a)$ mod 2,
we get
\begin{equation*}
\tfr{\Tr}(a)\equiv \left\{
	\begin{array}{rccl}
			0\pmod{4}& \text{ if } &\Tr(a) = 0 \text{ and } &Q(a) = 0,\\
			1\pmod{4}& \text{ if } &\Tr(a) = 1 \text{ and } &Q(a) = 0,\\
			2\pmod{4}& \text{ if } &\Tr(a) = 0 \text{ and } &Q(a) = 1,\\
			3\pmod{4}& \text{ if } &\Tr(a) = 1 \text{ and } &Q(a) = 1,
		\end{array} \right.
		\end{equation*} which proves the result.\qedhere \end{proof}

\section{Binary Kloosterman sums modulo $48$}\label{sect::Mod48}
We combine the results above with the result on the divisibility modulo 3 of binary Kloosterman 
sums from \cite{CHZ}, \cite{Div3}, \cite{MMAA} and \cite{MMeven} to fully characterise the 
congruence modulo 48 of binary Kloosterman sums.

\subsection{Case $n$ odd}
\begin{thm}
Let $q = 2^n$ and let $a \in \f{q}^{\times}$ where $n$ is odd and $n \ge 5$.
\begin{enumerate}
\item If $\Tr (a^{1/3}) = 0$ then
\begin{equation*}
\mathcal{K}(a)\equiv \left\{
	\begin{array}{rccl}
			4\pmod{48}& \text{ if } &\Tr(a) = 1 \text{ and } &Q(a) = 1,\\
			16\pmod{48}& \text{ if } &\Tr(a) = 0 \text{ and } &Q(a) = 0,\\
			28\pmod{48}& \text{ if } &\Tr(a) = 1 \text{ and } &Q(a) = 0,\\
			40\pmod{48}& \text{ if } &\Tr(a) = 0 \text{ and } &Q(a) = 1, 
		\end{array}\right.
\end{equation*}
\item If $\Tr (a^{1/3}) = 1$, let $\beta$ be the unique element satisfying $\Tr(\beta) = 0$, 
$a^{1/3} = \beta^4+\beta+1$. Then 
\begin{equation*}
\mathcal{K}(a)\equiv \left\{
	\begin{array}{rcccl}
			0\pmod{48}& \text{ if } &\Tr(a) = 0, &Q(a) = 0, &n+\Tr (\beta^3) \equiv 5,7 \ (8),\\
			8\pmod{48}& \text{ if } &\Tr(a) = 0, &Q(a) = 1,  &n+{\Tr} (\beta^3) \equiv 1,3\ (8),\\
			12\pmod{48}& \text{ if } &\Tr(a) = 1, &Q(a) = 0,   &n+ \Tr (\beta^3) \equiv 5,7\ (8),\\
			20\pmod{48}& \text{ if } &\Tr(a) = 1, &Q(a) = 1,  &n+ \Tr (\beta^3) \equiv 1,3\ (8),\\
			24\pmod{48}& \text{ if } &\Tr(a) = 0, &Q(a) = 1,   &n+\Tr (\beta^3) \equiv 5,7\ (8),\\
			32\pmod{48}& \text{ if } &\Tr(a) = 0, &Q(a) = 0,  &n+\Tr (\beta^3) \equiv 1,3 \ (8),\\
			36\pmod{48}& \text{ if } &\Tr(a) = 1, &Q(a) = 1, &n+\Tr (\beta^3) \equiv 5,7\ (8),\\
			44\pmod{48}& \text{ if } &\Tr(a) = 1, &Q(a) = 0,  &n+\Tr (\beta^3) \equiv 1,3 \ (8).
		\end{array} \right.
\end{equation*}Note that  we consider $\Tr(\beta^3)$ to be an integer in the final congruences.
 
\end{enumerate}
\end{thm}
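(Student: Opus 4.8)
The plan is to obtain this characterisation from the Chinese Remainder Theorem, combining the modulo $16$ information already established in Theorem \ref{thm::mod16} with a modulo $3$ characterisation taken from the divisibility literature \cite{CHZ, Div3, MMAA, MMeven}. Since $48 = 16 \cdot 3$ and $\gcd(16,3) = 1$, the residue of $\mathcal{K}(a)$ modulo $48$ is determined uniquely by its residues modulo $16$ and modulo $3$, so it suffices to record each of these and then merge them case by case.

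The modulo $16$ contribution requires no new work: Theorem \ref{thm::mod16} already gives $\mathcal{K}(a) \bmod 16$ as one of $0, 4, 8, 12$, according to the four possibilities for the pair $(\Tr(a), Q(a))$. For the modulo $3$ contribution I would first note that, because $n$ is odd, $\gcd(3, 2^n - 1) = 1$; hence cubing is a bijection on $\f{q}^{\times}$, the cube root $a^{1/3}$ is well defined, and the element $\beta$ with $\Tr(\beta) = 0$ and $a^{1/3} = \beta^4 + \beta + 1$ exists and is unique. I would then invoke the modulo $3$ results, which assert that $\mathcal{K}(a) \equiv 1 \pmod 3$ precisely when $\Tr(a^{1/3}) = 0$, and that when $\Tr(a^{1/3}) = 1$ the residue is $0 \pmod 3$ if $n + \Tr(\beta^3) \equiv 5, 7 \pmod 8$ and $2 \pmod 3$ if $n + \Tr(\beta^3) \equiv 1, 3 \pmod 8$, where $\Tr(\beta^3)$ is read as an integer.

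With both ingredients in hand, the result follows by lifting through the Chinese Remainder Theorem. In part (1) one has $\mathcal{K}(a) \equiv 1 \pmod 3$ throughout, so the modulo $16$ residues $0, 4, 8, 12$ lift to the unique residues modulo $48$ congruent to $1$ modulo $3$, namely $16, 4, 40, 28$; matching these against the $(\Tr(a), Q(a))$ labels of Theorem \ref{thm::mod16} produces the four listed lines. In part (2) each modulo $16$ residue is paired with $\mathcal{K}(a) \equiv 0$ or $\mathcal{K}(a) \equiv 2 \pmod 3$ as dictated by $n + \Tr(\beta^3) \bmod 8$, and the eight CRT lifts give exactly the tabulated residues.

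I expect the real difficulty to be expository and bibliographic rather than computational. The CRT bookkeeping is entirely routine once the two ingredients are fixed; the delicate point is to extract from \cite{CHZ, Div3, MMAA, MMeven} the modulo $3$ statement in precisely the form above—in particular the refined $\Tr(a^{1/3}) = 1$ case, with its dependence on $n + \Tr(\beta^3) \bmod 8$—and to verify that the parametrisation $a^{1/3} = \beta^4 + \beta + 1$ with $\Tr(\beta) = 0$ agrees with the normalisation used in those references.
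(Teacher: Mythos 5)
Your proposal is correct and is essentially identical to the paper's own (very brief) proof: both combine Theorem \ref{thm::mod16} with the modulo-$3$ characterisation from \cite{CHZ} via the Chinese Remainder Theorem, and your CRT lifts ($16,4,40,28$ in part (1), and the eight residues in part (2)) all check out. The only point worth noting is that the paper quotes the \cite{CHZ} criterion in the split form ``$\Tr(\beta^3)=0$ and $n\equiv 5,7$, or $\Tr(\beta^3)=1$ and $n\equiv 1,3 \pmod 8$'' rather than your combined form ``$n+\Tr(\beta^3)\equiv 5,7 \pmod 8$,'' so, as you anticipated, the bibliographic matching of that condition to the theorem's phrasing is the one place requiring care.
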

\begin{proof} Follows from Theorem \ref{thm::mod16} above, and theorem 3 of \cite{CHZ}, which implies that
$\mathcal{K}(a) \equiv 1 \pmod{3} \iff \Tr(a^{1/3}) = 0$ and 
otherwise,  $\mathcal{K}(a) \equiv 0 \pmod{3}$ if and only if   either $\Tr(\beta^3)=0$
and $n \equiv 5 \text{ or } 7 \pmod{8}$,
or $ \Tr(\beta^3)=1 \text{ and } n \equiv 1 \text{ or } 3 \pmod{8}$.
\end{proof}

\subsection{Case $n$ even}
By a similar argument (with a few more cases) we can combine Theorem \ref{thm::mod16} above with Theorem 11 of \cite
{MMeven} to classify the congruence modulo 48 of the Kloosterman sum on $\mathbb{F}_
{2^n}$ where $n$ is even.  We omit the details.

\section{Binary Kloosterman sums modulo 64}\label{sect::Mod64}

So far in this paper we have used 
the lifted trace modulo 2 (the usual finite field trace) and the lifted quadratic trace modulo 2 
to characterize the Kloosterman sums modulo 16.
Further information can be obtained using the lifted traces modulo higher powers of 2.
We will now show how the values taken by the lifted trace modulo 16 determine the congruence modulo 64 of binary Kloosterman sums, using the Gross-Koblitz formula.

The first part of this section, down to theorem \ref{thm::mod64}, is a restatement of Section 8 of \cite{LLMZ} (with a correction when $q=4$).

For a field $\mathbb{F}_{q}=\mathbb{F}_{2^n}$, and $j$ a residue modulo $q-1$, 
the {\bf Gross-Koblitz formula} \cite{Rob} states that 
\begin{equation}\label{GKformula}
\tau (\bar{\omega}^j) = (-2)^{\wt_2(j)}\prod_{i=0}^{n-1}\Gamma_{2}\left(\left\langle \frac{2^{i}j}{q-1} \right\rangle\right)
\end{equation}
where $\langle x \rangle$ is the fractional part of $x$, and $\Gamma_2$ is the 2-adic Gamma function.

The $p$-adic Gamma function $\Gamma_{p}$ 
is defined over $\mathbb{N}$ by \[\Gamma_{p}(k) = (-1)^k\prod_{\substack{t<k\\(t,p) = 1}}t \,.\]
By the generalised Wilson's theorem, $\Gamma_{p}(p^k) \equiv 1 \pmod{p^k}$, unless $p^k = 4$, in which case $\Gamma_{2}(4) \equiv -1 \pmod{4}$.

Suppose $x \equiv y \pmod{2^k}$. 
Observe that $(-1)^{x+2^k} = (-1)^{x}$, and that the product \[\prod_{\substack{x\le t <x+2^k\\ (t,2) = 1}}t\ \bmod{2^k}\] consists of $2^{k-1}$ 
distinct elements, and hence is congruent to $\Gamma_2(2^k)$.
It follows that $\Gamma_2(x) \equiv \Gamma_2(y) \pmod{2^k}$ unless $k = 2$, in which case $\Gamma_2(x) \equiv -\Gamma_2(y) \pmod{4}$. 
\bigskip

\begin{thm}\label{thm::mod64} Let $n\ge 6$ and let $q = 2^n$. For $a \in \f{q}$,
\begin{equation*}
\mathcal{K}(a)\equiv \left\{
	\begin{array}{rcccl}
			0\pmod{64}&\text{ if } &\tfr{\Tr}(a) \equiv &0 &\pmod{16}\\
			4\pmod{64}&\text{ if } &\tfr{\Tr}(a) \equiv &11 &\pmod{16}\\
			8\pmod{64}&\text{ if } &\tfr{\Tr}(a) \equiv &10 &\pmod{16}\\
			12\pmod{64}&\text{ if } &\tfr{\Tr}(a) \equiv &13 &\pmod{16}\\
			16\pmod{64}&\text{ if } &\tfr{\Tr}(a) \equiv &4 &\pmod{16}\\
			20\pmod{64}&\text{ if } &\tfr{\Tr}(a) \equiv &15 &\pmod{16}\\
			24\pmod{64}&\text{ if } &\tfr{\Tr}(a) \equiv &14 &\pmod{16}\\
			28\pmod{64}&\text{ if } &\tfr{\Tr}(a) \equiv &1 &\pmod{16}\\
			32\pmod{64}&\text{ if } &\tfr{\Tr}(a) \equiv &8 &\pmod{16}\\
			36\pmod{64}&\text{ if } &\tfr{\Tr}(a) \equiv &3 &\pmod{16}\\
			40\pmod{64}&\text{ if } &\tfr{\Tr}(a) \equiv &2 &\pmod{16}\\
			44\pmod{64}&\text{ if } &\tfr{\Tr}(a) \equiv &5 &\pmod{16}\\
			48\pmod{64}&\text{ if } &\tfr{\Tr}(a) \equiv &12 &\pmod{16}\\
			52\pmod{64}&\text{ if } &\tfr{\Tr}(a) \equiv &7 &\pmod{16}\\
			56\pmod{64}&\text{ if } &\tfr{\Tr}(a) \equiv &6 &\pmod{16}\\
			60\pmod{64}&\text{ if } &\tfr{\Tr}(a) \equiv &9 &\pmod{16}.
		\end{array} \right.
\end{equation*}
\end{thm}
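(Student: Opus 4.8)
The plan is to read off $\mathcal{K}(a)$ modulo $64$ directly from the Gauss-sum expansion \eqref{firstKL}, keeping only the terms that survive modulo $2^6$. By \eqref{val2} we have $\nu_2(g(j)^2) = 2\wt_2(j)$, so modulo $64$ only indices $j$ with $\wt_2(j) \le 2$ contribute, and
\[
\mathcal{K}(a) \equiv -\sum_{j \in W_1} g(j)^2\,\omega^j(a) \;-\; \sum_{j \in W_2} g(j)^2\,\omega^j(a) \pmod{64}.
\]
The goal is to evaluate the two sums and then re-express the answer purely in terms of $\tfr{\Tr}(a)$.

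For the weight-one sum I would apply the Gross-Koblitz formula \eqref{GKformula}. For $j = 2^s \in W_1$ the arguments $\langle 2^i 2^s/(q-1)\rangle$ run, as $i$ ranges over $0,\dots,n-1$, through exactly the set $\{2^m/(q-1) : 0 \le m < n\}$, so $g(j) = -2G$ with $G := \prod_{m=0}^{n-1}\Gamma_2(2^m/(q-1))$ independent of $s$; hence the weight-one sum equals $-4G^2\,\tfr{\Tr}(a)$ exactly. It remains to compute $G^2$ modulo $16$, equivalently $G$ modulo $8$. Here I would use $1/(q-1) \equiv -1 \pmod{2^n}$, so that $2^m/(q-1) \equiv -2^m \pmod{2^n}$, together with $\Gamma_2(x) \equiv \Gamma_2(y)\pmod{2^k}$ for $x \equiv y \pmod{2^k}$; since $n \ge 6$ I may take $k = 3$ and stay clear of the $k=2$ exception $\Gamma_2(x) \equiv -\Gamma_2(y)\pmod 4$. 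This reduces $G$ modulo $8$ to $\prod_{m}\Gamma_2(-2^m)$, in which every factor with $m \ge 3$ is $\equiv 1$, and I expect $G \equiv \Gamma_2(-1)\Gamma_2(-2)\Gamma_2(-4) \equiv 1\cdot(-1)\cdot 3 \equiv 5 \pmod 8$, so that $G^2 \equiv 9 \pmod{16}$.

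For the weight-two sum the point is softer: \eqref{GKformula} gives $g(j)^2 = 16\prod_i \Gamma_2(\cdot)^2$, and since each $\Gamma_2$-value is an odd $2$-adic unit its square is $\equiv 1 \pmod 8$, whence $g(j)^2 \equiv 16 \pmod{64}$ for every $j \in W_2$. Thus the weight-two sum is $\equiv -16\,\tfr{Q}(a) \pmod{64}$, and combining the pieces gives
\[
\mathcal{K}(a) \equiv -4G^2\,\tfr{\Tr}(a) - 16\,\tfr{Q}(a) \pmod{64}.
\]
I would then eliminate $\tfr{Q}(a)$ using the exact identity $\tfr{\Tr}(a)^2 = 2\tfr{Q}(a) + \tfr{\Tr}(a)$ established in Section \ref{sect::Mod16}, which yields $16\tfr{Q}(a) = 8\tfr{\Tr}(a)^2 - 8\tfr{\Tr}(a)$. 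Substituting this and $G^2 \equiv 9 \pmod{16}$ collapses everything to the single congruence
\[
\mathcal{K}(a) \equiv -28\,\tfr{\Tr}(a) - 8\,\tfr{\Tr}(a)^2 \pmod{64}.
\]
Since $\tfr{\Tr}(a) = \mathrm{Tr}_{\mathbb{Q}_2(\xi)/\mathbb{Q}_2}(\omega(a))$ lies in $\mathbb{Z}_2$, its class modulo $16$ is an ordinary integer, and substituting each value $\tfr{\Tr}(a) \equiv 0,1,\dots,15 \pmod{16}$ into the right-hand side reproduces the sixteen rows of the table.

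The main obstacle I anticipate is the exact determination of $G^2$ modulo $16$, i.e.\ the unit part of the weight-one Gauss-sum square modulo $64$. This forces careful bookkeeping of the $2$-adic $\Gamma_2$-congruences, in particular ensuring the exceptional case $k=2$ never intervenes; this is precisely why the hypothesis $n \ge 6$ (rather than $n \ge 4$) is imposed, and where the ``$q=4$ correction'' of \cite{LLMZ} would otherwise appear. Everything else is routine: the weight-two contribution needs only that odd squares are $1$ modulo $8$, and the final table is a mechanical evaluation of one quadratic congruence.
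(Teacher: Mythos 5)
Your proposal is correct and takes essentially the same route as the paper: truncate \eqref{firstKL} at weight two, evaluate the weight-one Gauss sums via Gross--Koblitz (your $G\equiv 5\pmod{8}$, hence $g(1)^2\equiv 36\pmod{64}$, agrees with the paper's computation), note that $g(j)^2\equiv 16\pmod{64}$ for $j\in W_2$, and eliminate $\tfr{Q}(a)$ through the identity $\tfr{\Tr}(a)^2=2\tfr{Q}(a)+\tfr{\Tr}(a)$; your closed form $-28\,\tfr{\Tr}(a)-8\,\tfr{\Tr}(a)^2$ does reproduce all sixteen rows of the table. The only slip is in your closing remark: the hypothesis $n\ge 6$ is needed chiefly so that \eqref{firstKL}, which is a congruence modulo $q=2^n$, can be read modulo $64$, whereas the $\Gamma_2$ bookkeeping you worry about already goes through for $n\ge 3$.
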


\begin{proof}  By the statements above,
the following congruences hold for residues mod $8$:
\begin{align*}
&\Gamma_2(0) \equiv 1 \pmod{8}\\
&\Gamma_2(1) \equiv 7 \pmod{8}\\
&\Gamma_2(2) \equiv 1 \pmod{8}\\
&\Gamma_2(3) \equiv 7 \pmod{8}\\
&\Gamma_2(4) \equiv 3 \pmod{8}\\
&\Gamma_2(5) \equiv 5 \pmod{8}\\
&\Gamma_2(6) \equiv 7 \pmod{8}\\
&\Gamma_2(7) \equiv 1 \pmod{8}\,.
\end{align*}

By Lemma 6.5 of \cite{Wash}, $g(2^i) = g(1)$, and a simple calculation gives 
\[\prod_{i=0}^{n-1}\Gamma_{2}\left(\left\langle \frac{2^{i}j}{q-1} \right\rangle\right) \equiv 5 \pmod{8}\,.\]
Thus \[g(1) \equiv 6 \pmod{16}\]which implies
\[g(1)^2 \equiv 36 \pmod{64}\,.\]

Now using Stickelberger's theorem (Theorem \ref{Stick+}), we see that for $j$ of weight 2, \[g(j) \equiv 4 \pmod{8}\] and thus\[g(j)^2 \equiv 16 \pmod{64}\,.\]

Taking this into account, reading congruence \eqref{firstKL} modulo 64 gives
\[
\mathcal{K}(a) \equiv -36\ \tfr{\Tr}(a) -16\ \tfr{Q}(a) \pmod{64}\,.
\]

As we have noted, \[2\tfr{Q}(a) = \tfr{\Tr}(a)^2- \tfr{\Tr}(a)\,,\] so the value of $\tfr{\Tr}(a) \bmod{16}$ determines $\tfr{Q}(a) \bmod{8}$, and so determines $16\ \tfr{Q}(a) \pmod{64}$.
Thus $\tfr{\Tr}(a) \bmod{16}$ completely determines $\mathcal{K}(a)$ mod 64.
The possibilities are enumerated in the statement.
\end{proof}

\begin{rem}Just as we did in Section \ref{sect::Mod48}, this theorem can be combined with the results on binary Kloosterman sums modulo 3 to yield a theorem characterizing binary Kloosterman sums modulo 192.  We omit the details.\end{rem}

\subsection*{Acknowledgements}

This research was supported by the Claude Shannon Institute, Science Foundation Ireland Grant 06/MI/006 and, in the case of the third author, the Irish Research Council for Science, Engineering and Technology.

\bibliographystyle{siam}
\bibliography{main}
\end{document}